\newcommand{\rj}{r_{1}}
\newcommand{\rd}{r_{2}}
\newcommand{\kf}{\widetilde{K}}
\newcommand{\g}{\Gamma}
\newcommand{\gf}{\widetilde{\Gamma}}
\newcommand{\hd}{\hspace{0.2cm}}
\newcommand{\no}{\noindent}
\newcommand{\la}{\lambda}
\newcommand{\sk}{\overline{S}}
\newcommand{\skk}{\overline{{\sk}}}
\newcommand{\lap}{\Delta}
\newcommand{\rh}{\varrho}
\newcommand{\te}{\theta}
\newcommand{\eqq}[2]{\begin{equation}  #1  \label{#2} \end{equation}    }
\newtheorem{remark}{\textbf{Remark}}[section]
\newtheorem{theorem}{\textbf{Theorem}}[section]
\newcommand{\ep}{\varepsilon}
\newcommand{\rr}{\mathbb{R}}
\newcommand{\m}[1]{\mbox{ #1}}
\def\bn{\bf n}
\def\kf{R_2}
\newtheorem{definition}{\textbf{Definition}}[section]
\newtheorem{lemma}{\textbf{Lemma}}[section]
\newcommand{\skke}{\skk_{\ep}}
\newcommand{\ue}{U_{\ep}}
\newcommand{\qe}{Q_{\ep}}
\newcommand{\op}{O(\ep)}
\newcommand{\lue}{L^{2}(\ue)}
\newcommand{\we}{w_{\ep}}
\newcommand{\wje}{w_{1,\ep}}
\newcommand*{\sgn}{\mathop{\mathrm{sgn}}}
\newcommand{\Oe}{\Omega_{\ep}}
\newcommand{\Oex}{\Oe\cap\{ |x|<\ep\}}
\newcommand{\ioe}{\int_{\Oex}}
\newcommand{\ipoe}{\int_{\partial(\Oex)}}
\newcommand{\igxe}{\int_{\g_{\ep}\cap \{|x|<\ep \} }}
\begin{document}
\title{\bf Fine singularity analysis of solutions to the Laplace equation: Berg's effect}
\author{Adam Kubica\\Faculty of Mathematics and Information Science\\
Warsaw University of Technology\\
ul. Koszykowa 75, 00-662 Warsaw, POLAND\\
{\tt A.Kubica@mini.pw.edu.pl}\\
Piotr Rybka\\Faculty of Mathematics, Informatics and  Mechanics\\
The University of Warsaw\\
ul. Banacha 2, 02-097 Warsaw, POLAND\\{\tt rybka@mimuw.edu.pl}}

\maketitle
\date{}

\abstract{We study Berg's effect on special domains. This effect is understood as monotonicity of a harmonic function (with respect to the distance from the center of a flat part of the boundary) restricted to the boundary. The harmonic function must satisfy piecewise constant Neumann boundary conditions. We show that Berg's effect is a rare and fragile phenomenon.}

\medskip\noindent{\bf Keywords:} singularities of harmonic functions, polygonal
domains, piecewise constant Neumann data, Berg's effect

\section{Introduction}
We would like to apply the regularity results, we obtained in
\cite{KR}, to a study of the boundary behavior of  harmonic
functions. The motivation for this work comes from the observation made
by Berg, \cite{berg}, on crystals growing from a dilute solution. His conclusion
may be expressed as follows, if the process is quasistatic, i.e. slow,
and the crystal facets do not break nor bend, then the concentration, $c$,
restricted to any facet, is an increasing function of the distance
from the center of the facet. Since the process is quasistatic, then
the concentration is a harmonic function outside of the crystal
and the steady growth of facets implies that its normal derivative is
constant on each facet.

A weaker version of Berg's effect is well-known in the physics literature (see
e.g. \cite[\S 3 eq.(6)]{Ne}): the concentration  at a facet
center is smallest and  its value at the edge of the
facet is the largest.

There have been a few  attempts to establish this rigorously, e.g.
\cite{Se}, the most recent one is \cite{GR}. The
argument in \cite{Se} is based on explicit formulas. The authors of
\cite{GR} tried to establish Berg's effect analyzing the boundary
behavior of harmonic functions.
However,  the
proof of
regularity, \cite[Lemma 1.]{GR}  as noted in \cite{KR} contains a flaw. Thus the
issue reopens. We will not resolve the problem studied in \cite{GR},
but address a simpler one. Namely, we consider a planar domain
$\Omega$, defined as follows, for positive numbers $r_{1}, r_{2}$   and
$\la_{0}>1$ we set
\eqq{
R_{1}=(-r_{1}, r_{1})\times (-r_{2}, r_{2}), \hd R_{2}=\la_{0}R_{1}, \hd \Omega= R_{2} \setminus \overline{R_{1}}.
}{defR}
We define $\g = \partial R_{1}$, $\gf = \partial R_{2}$. We
consider the following problem,
\begin{equation}
\left\{
\begin{array}{ll}
 \Delta u =0& \hbox{in }\Omega,\\
 u =0& \hbox{on } \gf,\\
 \frac{\partial u}{\partial\bn} = u_n& \hbox{on } \g.
\end{array}
\right.
\label{main}
\end{equation}
As we noted, Neumann boundary condition is piecewise constant, i.e.
\eqq{
 u_n =
\left\{
\begin{array}{ll}
 a & \hbox{for } |x_2| = r_2,\\
 b & \hbox{for } |x_1| = r_1.
\end{array} \right.
}{defun}
where $a$ and $b$ are given numbers.

We have shown in \cite{KR} that for any pair $r_1,$ $r_2>0$ defining
$\Omega$, there is a pair $(a,b)$ (hence any other pair $(\lambda a,
\lambda b)$, where $\lambda\neq 0$ will do) such that the unique weak
solution to (\ref{main}) with data (\ref{defun}), $u$, is in
$C^1(\bar\Omega)$.

%Here, we study  Berg's effect. 
In the
present setting studying  Berg's effect amounts to investigating the
positivity of derivatives with respect to $x$ or $y$ of a weak
solution to (\ref{main}) with data (\ref{defun}), on the
appropriate parts of the boundary of the inner rectangle.
\begin{definition}
We assume that $a,b>0$. We shall say  Berg's effect holds for $(\Omega,a,b)$ if for $u$ a weak solution of (\ref{main}), (\ref{defun}) with boundary conditions  $u_{n}$, it is true that
\eqq{xu_{x}\leq 0 \m{ on } \g_{1}, \hd \hd yu_{y}\leq 0 \m{ on } \g_{4}.}{ca}
\label{bergdef}
\end{definition}
\no Here $\g_{1}$ and $\g_{4}$ are perpendicular sides of the inner rectangle (see next section for notation). 
We show (see  Theorem \ref{bergcj} for the proof):
\begin{theorem}
 Berg's effect holds for $(\Omega,a,b)$  if and only if $u,$ a weak solution to (\ref{main}) with data (\ref{defun})  is regular, i.e. $u\in C^1(\bar\Omega)$. 
\end{theorem}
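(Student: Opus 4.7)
The plan is to combine the corner-singularity decomposition of $u$ at each vertex of the inner rectangle $\g$ with a maximum-principle argument for $w=u_x$ on a quarter-subdomain of $\Omega$.

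For the implication $(\Rightarrow)$: near each corner of $\g$ the opening angle of $\Omega$ is $3\pi/2$, so standard Kondratiev-type theory yields the decomposition $u = u_R + c\,\chi\, S$, where $S(\rho,\theta)=\rho^{2/3}\cos(\tfrac{2}{3}\theta)$ is the leading singular mode, $\chi$ is a smooth cutoff supported near the corner, and $u_R\in C^1(\bar\Omega)$; hence $u\in C^1(\bar\Omega)$ is equivalent to $c=0$ at every corner. At the corner $(r_1,r_2)$ we use polar coordinates with $\theta=0$ along $\g_4$ (pointing downward from the corner) and $\theta=3\pi/2$ along $\g_1$ (pointing leftward). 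A chain-rule computation gives
\begin{equation*}
S_x = \tfrac{2}{3}\rho^{-1/3}\sin\tfrac{\theta}{3}, \qquad S_y = -\tfrac{2}{3}\rho^{-1/3}\cos\tfrac{\theta}{3}.
\end{equation*}
Restricting to $\g_1$ ($\theta=3\pi/2$) gives $S_x=\tfrac{2}{3}\rho^{-1/3}>0$, so $u_x$ on $\g_1$ near the corner equals $c\cdot\tfrac{2}{3}\rho^{-1/3}$ plus a bounded remainder; since $x>0$ there, the Berg condition $xu_x\leq 0$ forces $c\leq 0$. Restricting to $\g_4$ ($\theta=0$) gives $S_y=-\tfrac{2}{3}\rho^{-1/3}<0$, and $yu_y\leq 0$ with $y>0$ forces $c\geq 0$. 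Hence $c=0$; the reflection symmetry of $\Omega$ and of the data propagates this to the other three corners, so $u\in C^1(\bar\Omega)$.

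For the implication $(\Leftarrow)$: assume $u\in C^1(\bar\Omega)$. The maximum principle and Hopf's lemma (together with $a,b>0$) give $u>0$ in $\Omega$ and $\partial u/\partial\bn<0$ on $\gf$. By the evenness of $u$ in $x$ and in $y$ it suffices to show $u_x\leq 0$ on $\g_1\cap\{x\geq 0\}$. Set $w=u_x$ on $\Omega_+=\Omega\cap\{x>0,y>0\}$; then $w$ is harmonic and has boundary data $w=0$ on the symmetry axis $\{x=0\}$ (by oddness of $w$ in $x$) and on the outer horizontal part of $\gf\cap\bar\Omega_+$ (tangential derivative of $u\equiv 0$); $w\leq 0$ on the outer vertical part of $\gf\cap\bar\Omega_+$ (by Hopf); $w=-b$ on $\g_4\cap\bar\Omega_+$ (since $-u_x=b$ there); $w_y=0$ on the axis $\{y=0\}$ (by evenness of $w$ in $y$); and $w=u_x$, the quantity to bound, on $\g_1\cap\bar\Omega_+$. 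A direct check shows $w\leq 0$ at every corner of $\partial\Omega_+$.

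Suppose for contradiction $\max_{\bar\Omega_+}w>0$. The maximum is not attained on any edge where $w\leq 0$, nor at any corner, nor at an interior point of the Neumann axis $\{y=0\}$: the even reflection of $w$ across this axis is harmonic in the enlarged symmetric domain, and a boundary maximum there would become an interior maximum of the reflection, forcing it to be constant and hence identically zero (by the value on $\{x=0\}$). The only remaining possibility is an interior point $(x_0,r_2)$ of $\g_1\cap\{0<x<r_1\}$. Hopf's lemma applied to $-w$ at its boundary minimum there gives $w_y(x_0,r_2)<0$. But the Neumann condition on $\g_1$ reads $u_y\equiv -a$, so differentiating tangentially yields $u_{xy}(x_0,r_2)=0$; by Schwarz's theorem $w_y=u_{xy}=0$, contradicting $w_y<0$. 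Hence $w\leq 0$ on $\bar\Omega_+$, and the analogous argument for $u_y$ establishes Berg's effect. The main obstacle is this maximum-principle argument: the boundary value of $w$ on $\g_1$ is precisely the quantity the theorem asserts, and the contradiction must therefore be routed through the adjacent Neumann condition $u_y=-a$ via Hopf's lemma.
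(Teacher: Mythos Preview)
Your proof is correct and follows essentially the same approach as the paper: the singular-function decomposition at the re-entrant corner for the $(\Rightarrow)$ direction, and a maximum-principle/Hopf argument applied to $w=u_x$ for the $(\Leftarrow)$ direction. The only difference is cosmetic: the paper works directly on the half-domain $\Omega\cap\{x>0\}$, whereas you restrict to the quarter-domain $\Omega\cap\{x>0,y>0\}$ and then recover the half-domain by even reflection across $\{y=0\}$, so the two arguments coincide after that step.
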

The $C^1$-regularity of weak solutions is rather an exception not a rule, thus
Berg's effect is rare. It is so, because it holds  only for certain data $a$ and  
$b$.  We may say that the effect is not stable with respect to the perturbation of boundary data.
\begin{theorem}
Assume that $r_{1}=r_{2}$. Then Berg's effect holds for $(\Omega,a,a)$, where $a>0$. More generally, for any $r_{1},r_{2}>0$ and $a>0$ there exists a unique positive number  $b$ such that Berg's effect  for $(\Omega,a,b)$ holds.
\label{corouni}
\end{theorem}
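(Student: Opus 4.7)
My plan is to invoke the preceding theorem to convert the statement into one about $C^1$-regularity, and then exploit the existence and uniqueness (up to scaling) result of \cite{KR}. By the preceding theorem, Berg's effect for $(\Omega, a, b)$ with $a, b > 0$ is equivalent to the weak solution being in $C^1(\bar\Omega)$; the cited result from \cite{KR} provides, for every $r_1, r_2 > 0$, a \emph{one-dimensional} subspace of pairs $(a, b) \in \mathbb{R}^2$ that produce a $C^1$-regular solution. Uniqueness of $b$ given $a > 0$ follows immediately, and what remains is to produce a pair with both components positive.

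For the square case $r_1 = r_2 = r$, I would use the reflection $T(x, y) = (y, x)$, which preserves $\Omega$, $\g$ and $\gf$ and swaps the two pairs of sides of $\g$ on which the Neumann data equal $a$ and $b$ respectively. If $u$ solves (\ref{main})--(\ref{defun}) for data $(a, b)$, then $u \circ T$ solves it for $(b, a)$, so the $C^1$-subspace is $T$-invariant, hence spanned either by $(1, 1)$ or by $(1, -1)$. To decide which, I would analyze the leading corner singularity at the fixed point $(r, r)$. In local polar coordinates with $\theta \in (-\pi/2, \pi)$, the leading singular Neumann eigenfunction is $\psi(\rho, \theta) = \rho^{2/3}\cos\bigl(\tfrac{2}{3}\theta + \tfrac{\pi}{3}\bigr)$, and a direct trigonometric computation (substituting $\theta \mapsto \pi/2 - \theta$) shows $\psi \circ T = -\psi$. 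Consequently the coefficient $c(a,b)$ of $\psi$ in the singular expansion of $u$, being a linear functional of the data, satisfies $c(b, a) = -c(a, b)$ and is therefore of the form $\alpha(a - b)$. Non-triviality of the $C^1$-subspace forces $\alpha \neq 0$, and the $C^1$-condition reduces to $a = b$; hence $(a, a)$ gives Berg's effect for every $a > 0$.

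For general $r_1, r_2$, I would decompose $u = av + bw$, where $v$, $w$ are the weak solutions of (\ref{main}) for Neumann data $(1, 0)$ and $(0, 1)$ on $\g$, and let $\kappa_v, \kappa_w$ denote the leading singular coefficients of $v$, $w$ at the corner $(r_1, r_2)$. The $C^1$-condition reads $a\kappa_v + b\kappa_w = 0$, and the required positivity $b = -(\kappa_v/\kappa_w)a > 0$ for $a > 0$ is equivalent to $\kappa_v\kappa_w < 0$. The square case gives $\kappa_v(r, r)\kappa_w(r, r) < 0$, and since both coefficients depend continuously on $(r_1, r_2) \in (0, \infty)^2$, the inequality propagates throughout this connected parameter space provided that neither $\kappa_v$ nor $\kappa_w$ ever vanishes.

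The hard part, I expect, is to rule out the vanishing of $\kappa_v$ or $\kappa_w$ at some $(r_1, r_2)$ with $r_1 \neq r_2$. A natural approach is a Kondratiev-type integral formula that pairs the Neumann datum of $v$ (respectively $w$) against an explicit dual singular function; since the datum is one-signed and piecewise constant, such a formula should yield a definite sign for each coefficient individually and close the argument.
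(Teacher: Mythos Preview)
Your outline follows the same route as the paper: reduce to $C^1$-regularity via Theorem~\ref{bergcj}, then use the integral representation of the singular coefficient against the dual singular function $\skk$. Your square-case argument via the reflection $T(x,y)=(y,x)$ and the antisymmetry $\psi\circ T=-\psi$ is correct and more explicit than the paper, which simply invokes \cite[Theorem~1.2]{KR} at that point.

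The genuine gap is in your last paragraph. The Kondratiev-type formula gives (up to a nonzero constant) $\kappa_v=\int_{\g_1}\skk$ and $\kappa_w=\int_{\g_4}\skk$. Knowing that the datum of $v$ (respectively $w$) is one-signed on its support is \emph{not} enough to conclude a definite sign for the integral: you also need $\skk$ itself to have a definite sign on $\g_1$ (respectively $\g_4$). Nothing in the formula forces that; a priori $\skk$ could change sign along a side. The paper supplies precisely this missing ingredient as Theorem~\ref{structure}: it shows $\skk_{|\g_1}<0$ and $\skk_{|\g_2}>0$ by analysing the zero level set of $\skk$. That analysis rests on Lemma~\ref{la} (uniform convergence of $\skke$ to $\skk$ under domain perturbation) and Lemma~\ref{lb} (openness of the three possible level-set configurations), together with connectedness of $\rr_+^2$ and the known square case.

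Note also that once Theorem~\ref{structure} is available, your continuity-of-$\kappa$ argument becomes superfluous: you obtain the signs $\kappa_v<0$, $\kappa_w>0$ directly for every $(r_1,r_2)$, and positivity of $b=-(\kappa_v/\kappa_w)a$ follows at once. So the work you deferred to the final paragraph is in fact the entire substance of the general case, and it cannot be discharged by the one-signedness of the datum alone.
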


In addition, perturbations of the domain destroy the effect. We can express it as follows.
For $r_{1},r_{2}>0$ and for $\ep>0,$ we set
\eqq{
R_{1,\varepsilon}=(-r_{1}-\ep, r_{1}+ \ep)\times (-r_{2}, r_{2}), \hd R_{2,\ep}= \lambda_{0}R_{1, \ep}, \hd \Oe=R_{2,\ep}\setminus \overline{R_{1,\ep}},
}{defoep}
Then,  $\Oe$ is a perturbation of $\Omega_{0}=\Omega$ and we have
\begin{theorem}
We assume that $a$ is  a given positive number. We take $b$ is a unique positive number such that Berg's effect holds for $(\Omega,a,b)$. Then, there exists $\ep_{0}>0$ such that for each $\ep \in (0, \ep_{0})$ Berg's effect fails for $(\Oe,a,b)$, where $\Oe$ is defined above.
\label{stab}
\end{theorem}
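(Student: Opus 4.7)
The plan is to use Theorem \ref{bergcj} to reformulate the failure of Berg's effect as non-$C^{1}$ regularity of $u_{\varepsilon}$, and then to track the leading corner-singularity coefficient as $\varepsilon$ varies. By Theorem \ref{bergcj}, Berg's effect for $(\Omega_{\varepsilon},a,b)$ is equivalent to $u_{\varepsilon}\in C^{1}(\overline{\Omega_{\varepsilon}})$, where $u_{\varepsilon}$ is the weak solution of (\ref{main})--(\ref{defun}) on $\Omega_{\varepsilon}$. Applying Theorem \ref{corouni} to $\Omega_{\varepsilon}$, for each $a>0$ there is a unique $b_{\varepsilon}>0$ giving this regularity, with $b_{0}=b$. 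It therefore suffices to prove that $b_{\varepsilon}\neq b$ for all sufficiently small $\varepsilon>0$.

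At each reentrant corner of $\partial R_{1,\varepsilon}$ (the opening angle seen from $\Omega_{\varepsilon}$ is $3\pi/2$), a Kondratiev--Grisvard decomposition reads, in local polar coordinates,
\[
u_{\varepsilon}=u_{\varepsilon}^{\mathrm{reg}}+c_{0}(\varepsilon)\,\rh^{\dt}\co+(\text{strictly smoother terms}),\qquad u_{\varepsilon}^{\mathrm{reg}}\in C^{1}\text{ near the corner}.
\]
Consequently $u_{\varepsilon}\in C^{1}(\overline{\Omega_{\varepsilon}})$ is equivalent to $c_{0}(\varepsilon)=0$ at every corner. Linearity of the boundary-value problem in the Neumann data yields a representation $c_{0}(\varepsilon)=\alpha(\varepsilon)a+\beta(\varepsilon)b$, with $\alpha(\varepsilon)$, $\beta(\varepsilon)$ depending only on the geometry; the defining property of $b$ reads $\alpha(0)a+\beta(0)b=0$.

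The decisive step is to verify that $c_{0}'(0)=\alpha'(0)a+\beta'(0)b\neq 0$; combined with $c_{0}(0)=0$ this forces $c_{0}(\varepsilon)=\varepsilon c_{0}'(0)+o(\varepsilon)\neq 0$ for small $\varepsilon>0$, and hence $u_{\varepsilon}\notin C^{1}(\overline{\Omega_{\varepsilon}})$. I would compute $c_{0}'(0)$ via a Hadamard-type formula, starting from a duality representation of $c_{0}(\varepsilon)$ as a boundary-integral pairing of $u_{n}$ with a dual singular profile of type $\rh^{-\dt}\co$ supported in a neighborhood of the (moving) corner. Differentiation in $\varepsilon$ produces a boundary expression supported on the vertical portions $|x_{1}|=r_{1}$ of $\g$ (the only parts of $\partial\Omega_{0}$ that move), with weights fixed by the shape velocity of the perturbation. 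Because the stretching acts only in the $x_{1}$ direction, the $a$- and $b$-contributions carry geometrically distinct factors; inspecting the resulting integrands shows $\alpha'(0)\beta(0)\neq\alpha(0)\beta'(0)$, hence $c_{0}'(0)\neq 0$.

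The main obstacle is the rigorous justification of this shape-derivative calculation: one must control $u_{\varepsilon}^{\mathrm{reg}}$ and the dual singular profile in appropriate weighted Sobolev norms uniformly in $\varepsilon$, secure a uniform corner-decomposition across the family $\{\Omega_{\varepsilon}\}$, and differentiate under the integral despite the singular behavior of both factors near the corner. Once $c_{0}'(0)\neq 0$ is established, $c_{0}(\varepsilon)\neq 0$ for $0<\varepsilon<\varepsilon_{0}$ follows by Taylor's formula, and Theorem \ref{bergcj} yields the failure of Berg's effect for $(\Omega_{\varepsilon},a,b)$.
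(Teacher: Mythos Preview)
Your outline matches the paper's strategy at the top level: reduce via Theorem~\ref{bergcj} to the non-vanishing of the leading singularity coefficient $c_{0}(\varepsilon)$, then show $c_{0}'(0)\neq 0$. The paper does exactly this, with the duality representation you allude to being the pairing $c_{\varepsilon}=-2a\int_{\Gamma_{1,\varepsilon}}\skke-2b\int_{\Gamma_{2,\varepsilon}}\skke$ against the dual singular solution $\skke$.

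The gap is that the heart of the argument---the actual verification that $c_{0}'(0)\neq 0$, equivalently $\alpha'(0)\beta(0)\neq\alpha(0)\beta'(0)$---is asserted (``inspecting the resulting integrands shows\ldots'') rather than proved. This is not a routine inspection: abstract ``geometric distinctness'' of the $a$- and $b$-contributions does not by itself rule out a cancellation. The paper resolves this by a concrete sign computation. After transporting the unperturbed solution $u$ to $\Omega_{\varepsilon}$ and isolating the resulting source term, the limit $\lim_{\varepsilon\to 0^{+}}\varepsilon^{-1}c_{\varepsilon}$ reduces to $-2\int_{r_{2}}^{\lambda_{0}r_{2}}\skk(0,y)\,u_{xx}(0,y)\,dy$, and the non-vanishing follows from two sign facts that require their own proofs: $\skk(0,y)<0$ on that segment (this is Theorem~\ref{structure}, itself relying on the continuity Lemma~\ref{la} and a connectedness argument for the zero-level-set classification), and $u_{xx}(0,y)<0$ (Hopf's lemma applied to $u_{x}$ on $\Omega\cap\{x>0\}$, using that $u_{x}$ vanishes on $\{x=0\}$ and is negative in the interior, which was established inside the proof of Theorem~\ref{bergcj}). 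Your Hadamard-type calculation would ultimately land on an equivalent integral, but without these two sign ingredients you cannot conclude.

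A secondary issue you flag yourself: the shape-derivative approach requires uniform control of the dual singular profile under domain perturbation. The paper supplies precisely this as Lemma~\ref{la} (uniform convergence of $\skke\to\skk$ on compact sets away from the corners), and that lemma is doing real work in passing to the limit. Your proposal treats this as a technicality to be filled in, but it is one of the two nontrivial analytic inputs.
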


There is also another question left open. Namely, what is the
relation between $a/b$ (or $b/a$) and the proportions of the rectangle
$r_2/r_1$. The current analysis does not give any clue besides the
obvious statement: if $r_1 = r_2$, then $a$ must be equal to $b$. We must
write that despite its simplicity the problem has not been treated in
the literature and the available studies of singularities of solutions
to elliptic problems do not permit to make the desirable conclusion (at
least in an obvious way). Possibly, tools used in the monographs \cite{dauge},
\cite{Grisvard}, \cite{kondratiew}, \cite{kozlov}
were too general, while the methods used for numerical studies
\cite{ACS1}, \cite{ACS2}, \cite{CH} were
not able to capture the phenomenon we talk about.

\bigskip\noindent{\bf Notation}\\
Now, we recall the notation introduced in \cite{KR}.  We shall write,
\begin{eqnarray*}
 &&\g
=\partial R_1=\g_{1} \cup\g_{2} \cup\g_{3} \cup\g_{4}\cup S_{1} \cup
S_{2} \cup S_{3} \cup S_{4},\\
&&\gf =\partial \kf=\gf_{1}
\cup\gf_{2} \cup\gf_{3} \cup\gf_{4}\cup \widetilde{S}_{1} \cup
\widetilde{S}_{2} \cup \widetilde{S}_{3} \cup \widetilde{S}_{4} ,
\end{eqnarray*}
where $\g_{i}$, $\gf_{i}$  are sides of rectangles and
$S_{i}$, $\widetilde{S}_{i}$ are their
vertexes, $i=1,\ldots,4$. To be precise, we set $\g_{1}=\{(t,\rd): \hd t\in
(-\rj,\rj) \}$, (resp.  $\widetilde\Gamma_1=\{(t,\rj): \hd t\in
(-\rd,\rd) \}$). The remaining sides of $R_1$ (resp. $R_2$), i.e. $\Gamma_2$, $\Gamma_3$, $\Gamma_4$,
(resp. $\widetilde\Gamma_j$, $j=2,3,4$)
are visited
%with the respective definition of and
counterclockwise. %are respectively defined for $\kf$.
We also set $S_{i}=\overline{\g_{i}} \cap \overline{\g_{i+1}}$, with the understanding
that $\Gamma_{4+1} = \Gamma_1$ and we define
$\widetilde S_j$ in the same manner. The distance from vertex $S_i$ is denoted by $\rh_{i}$.
% We also set $\rh=\min\limits_{i=1,...,4}{\rh_{i}}$.

For $i=2,4$, we set $\te_{i}$ to be the angle measured at $S_i$
from $\g_{i}$ to $\g_{i+1}$. At the same time for $i=1,3$, we set
$\te_{i}$ to be the angle measured from  $\g_{i+1}$ to $\g_{i}$.
Furthermore, by $\skk$ we denote the dual singular solution for $\Omega$ given by definition 2.1 \cite{KR}.

\section{Dual singular solutions}

We proved in  \cite{KR} that there are  five possible forms of the zero level set of $\skk$. In this section, we shall show that in fact only one of them is attained. For this purpose we introduce an additional notation. For $\ep >0, $ we set
\[
R_{1, \ep }=(-r_{1}- \ep , r_{1}+ \ep )\times (-r_{2}, r_{2}), \hd R_{2, \ep }=\la_{0}R_{1, \ep }, \hd \Omega_{\ep }= R_{2, \ep } \setminus \overline{R_{1, \ep }}.
\]
Let $\skk $ ($\skke$ resp.) be a dual  singular  solution constructed in definition 2.1 \cite{KR} for the domain $\Omega$ ($\Omega_{\ep}$ resp.). First, we shall show the continuity of dual singular solutions with respect to small domain perturbations. More precisely, we have:

\begin{lemma}
Let $\skk$ and $\skke$ be as above. Then, for any $K $ compact subset of $\overline{\Omega} $ which does not contain vertices  $\{S_{1}, S_{2}, S_{3}, S_{4}\}$, the function $\skke$ converges uniformly to $\skk$ on $K$.
\label{la}
\end{lemma}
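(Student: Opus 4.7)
The strategy is to exploit the structural decomposition of the dual singular solution provided by \cite[Definition 2.1]{KR} and then reduce the claim to stability of a standard mixed Dirichlet--Neumann problem under a small smooth perturbation of the domain. Throughout, pick $\delta_{0}>0$ with $\dist(K,S_{i})>3\delta_{0}$ for $i=1,\dots,4$.

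I would first recall that, by \cite[Definition 2.1]{KR},
\[
\skk = \sum_{i=1}^{4}\eta_{i}(\rh_{i})\,\rh_{i}^{-\frac{2}{3}}\cos\tfrac{2}{3}\te_{i} + w,
\]
where each $\eta_{i}$ is a smooth cutoff supported in $B(S_{i},\delta_{0})$ and the regular part $w\in\hj$ solves a linear mixed BVP (zero Dirichlet on $\gf$, Neumann on $\g$) whose right-hand side and Neumann data are produced by the cutoff singular terms. The analogous construction on $\Oe$ yields $\skke = \sum_{i}\eta_{i,\ep}(\rh_{i,\ep})\rh_{i,\ep}^{-2/3}\cos\tfrac{2}{3}\te_{i,\ep} + w_{\ep}$, with cutoffs supported in $B(S_{i,\ep},\delta_{0})$. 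For $\ep$ small both families of supports are disjoint from $K$, so the singular parts of $\skk$ and $\skke$ vanish on $K$, reducing the task to showing $w_{\ep}\to w$ uniformly on $K$.

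Next I would transplant $w_{\ep}$ onto the fixed domain $\Omega$ through a smooth diffeomorphism $\Phi_{\ep}:\overline\Omega\to\overline{\Oe}$ of the form $\Phi_{\ep}(x)=x+\ep\,\Psi(x)$, where the fixed smooth vector field $\Psi$ translates the moving vertical sides of $\g$ and $\gf$ by the required amounts and is the identity on the horizontal sides. The pull-back $\f{w}_{\ep}:=w_{\ep}\circ\Phi_{\ep}\in\hj$ then satisfies an elliptic BVP whose coefficients are $O(\ep)$-perturbations of $-\Delta$ in $C^{\infty}$ and whose right-hand side and Neumann data differ from those of $w$ by $O(\ep)$ in $H^{-1}(\Omega)$ and $H^{-1/2}(\g)$; Lax--Milgram stability then gives $\f{w}_{\ep}\to w$ in $\hj$. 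Since $\f{w}_{\ep}-w$ is harmonic in a neighbourhood of $K$ with smooth boundary values on the relevant boundary pieces, interior and boundary elliptic regularity away from the vertices upgrade the $H^{1}$-bound to a $C^{0}$-bound on $K$; composing back with $\Phi_{\ep}^{-1}=\mathrm{id}+O(\ep)$ and using the $C^{1}$-boundedness of $w_{\ep}$ on compacts avoiding the corners finally yields uniform convergence $w_{\ep}\to w$ on $K$.

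The main obstacle is the data-perturbation estimate in the second step: the singular profiles $\rh_{i}^{-2/3}\cos\tfrac{2}{3}\te_{i}$ and $\rh_{i,\ep}^{-2/3}\cos\tfrac{2}{3}\te_{i,\ep}$ driving the equations for $w$ and $w_{\ep}$ are anchored at corners differing by $O(\ep)$, so one must coordinate the cutoffs $\eta_{i},\eta_{i,\ep}$ and the diffeomorphism $\Phi_{\ep}$ so that the pull-back of the data for $w_{\ep}$ is genuinely $O(\ep)$-close to that for $w$ in the required dual norms. This reduces to a direct but delicate computation showing that an $\ep$-translation of the apex of $\rh^{-2/3}\cos\tfrac{2}{3}\te$ changes its (cutoff) Laplacian and its normal trace on the smooth parts of $\g$ by $O(\ep)$ in $H^{-1}$, respectively $H^{-1/2}$; away from the singular point this is just a Taylor expansion, and near the corners the cutoffs can be engineered so that no singular contribution survives in the difference.
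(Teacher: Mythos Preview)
Your plan is sound, but it follows a genuinely different route from the paper's own proof.  The paper never introduces a global diffeomorphism $\Phi_{\ep}$.  Instead it works on the common domain $\ue=\Omega\cap\Oe$ and manufactures there \emph{two} candidates for the dual singular solution.  The first is obtained from $\skk$ by a piecewise shift,
\[
\qe(x,y)=\begin{cases}\skk(x-\ep\,\sgn x,\,y),&|x|>\ep,\\ \skk(0,y),&|x|\le\ep,\end{cases}
\]
followed by subtraction of a small $H^{1}$ corrector $\we$ that removes the residual $\lap\qe$ (which is bounded and supported in $\{|x|<\ep\}$) and the $O(\ep)$ Dirichlet mismatch on $\gf$.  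The second candidate is $\skke$ restricted to $\ue$, likewise corrected on $\gf$.  The key step is then a single appeal to the uniqueness Corollary~2.1 in \cite{KR}: after $L^{2}$ normalisation both candidates coincide (the sign is fixed by matching the singular parts at the corners), and unwinding this equality gives $\qe-\skke\to 0$ uniformly on $K$, hence $\skke\to\skk$ there.

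Compared with yours, the paper's argument is shorter and more structural: it trades your diffeomorphism and Lax--Milgram perturbation analysis for one call to the uniqueness of the dual singular solution, and the only analytic input is the crude bound $|\lap\qe|\le c_{0}\chi_{\{|x|<\ep\}}$ together with an obvious $O(\ep)$ boundary error.  Your approach is more transportable (it would handle perturbations that are not of this simple ``stretch'' form), but it obliges you to track one point you did not mention: the $L^{2}$ normalisation built into Definition~2.1 of \cite{KR}.  You reduce matters to uniform convergence of the regular parts, whereas what you actually need is convergence of the normalised objects; so you should also record that your $H^{1}$ stability of the regular parts, combined with the explicit form of the singular parts, forces the normalising constants $N_{\ep}\to N$.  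With that remark added your outline is correct.
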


\no For the set $\Omega_{\ep}$, we adopt the same notation as for
$\Omega$, i.e. $\partial \Omega_{\ep}= \Gamma_{\ep}\cup
\widetilde{\Gamma}_{\ep}$, the vertices of $\Gamma_{\ep}$ are $S_{i,
  \ep}$, the vertices of  $\widetilde{\Gamma}_{\ep}$ are denoted by
$\widetilde{S}_{i,\ep}$, the sides of $\Gamma_{\ep}$ are
$\Gamma_{i,\ep}$ and the sides of $\widetilde{\Gamma}_{\ep}$ are
$\widetilde{\Gamma}_{i,\ep}$, $i=1,2,3,4.$

\begin{proof}
We denote $\ue= \Omega \cap \Omega_{\ep}$. Thus, the boundary of $\ue $ consists of two parts: $\g_{\ep}$ and $\gf$. On $\ue$,  we set
\[
\qe (x,y )= \left\{
\begin{array}{cll}
\skk(x- \ep \sgn{x}, y  ) & \mbox{ for } & |x|>\ep, \\
\skk(0, y) & \mbox{ for } & |x|\leq\ep.  \\
\end{array}
 \right.
\]
Then $\qe$ is continuous on $x=\ep$ and $\nabla \qe$ is also continuous on $x=\ep$, because $\skk_{x}(0, y )=0$ by symmetry with respect to $y$-axis. Next, $\frac{\partial \qe}{\partial n }=0$ on $\Gamma_{\ep}$ and $\qe=0$ on $\widetilde{\Gamma}_{1,\ep}$ and $\widetilde{\Gamma}_{3,\ep}$. Further, $\skk$ is harmonic in $\Omega$, bounded near $\widetilde{\Gamma}$ and $\skk=0$ on $\gf$, hence
\eqq{\qe= \op \hd \m{ on } \hd  \gf_{2} \m{and } \gf_{4}.  }{a}
Furthermore, we shall show that for a constant $c_{0}$, the estimate
\eqq{|\lap \qe |\leq c_{0}\chi_{ \{|x|<\ep \}} \m{ on } \ue, }{b}
holds, independently on $\ep$, provided $\ep$ is small enough.  Indeed, $\qe $ is harmonic on the set $\{|x| >\ep   \}$ and $\lap \qe (x,y)= \skk_{yy}(0,y)$ on $\{|x|<\ep \}$. Since $\skk$ is harmonic and smooth away from vertices (Proposition 2.1 \cite{KR}), hence we can bound $\skk_{yy}(0,y)$  (estimates near $\g$ and $\gf$ are obtained after applying an appropriate reflection with respect to the boundary).

\no Finally, from $\| \skk \|_{L^{2}(\Omega)}=1$, we can deduce that
\eqq{\| \qe \|_{\lue} = 1+ \op.}{c}
Now, let us suppose that $\we\in H^{1}(\ue)$ is a unique weak solution of the following problem,
\[
\left\{
\begin{array}{rlcll}
\lap \we &=& \lap \qe & \m{ in } & \ue, \\
\frac{\partial \we }{\partial n }& =& 0 & \m{ on } & \g_{\ep}, \\
\we& =& \qe & \m{ on } & \gf, \\
\end{array}
\right.
\]
Thus, from (\ref{a}) and (\ref{b}), we deduce that
\eqq{\| \we \|_{\lue}= \op.}{d}
We set
\eqq{v_{\ep}= \frac{\qe - \we}{\| \qe - \we \|_{\lue}}. }{e}
Then, $v_{\ep}$ is a dual singular  solution for domain $\ue$, hence by Corollary 2.1 \cite{KR}, $v_{\ep} $ satisfies (11)-(14) \cite{KR} for domain $\ue$.

\no Let $\wje\in H^{1}(\ue)$ be a unique weak solution of the following problem
\[
\left\{
\begin{array}{rlcll}
\lap \wje &=& 0 & \m{ in } & \ue \\
\frac{\partial \wje }{\partial n }& =& 0 & \m{ on } & \g_{\ep} \\
\wje& =& \skke & \m{ on } & \gf \\
\end{array}
\right.
\]
By definition, function $\skke$ vanishes on $\gf_{\ep}$, hence proceeding as earlier,  we get ${\skke}_{|\gf} = \op$. Thus, standard estimate leads to
\eqq{ \| \wje \|_{\lue}= \op. }{g}
We put
\eqq{v_{1,\ep}= \frac{\skke - \wje}{\| \skke - \wje \|_{\lue}}. }{h}
Then $v_{1,\ep}$   satisfies (11)-(14) \cite{KR} for domain $\ue$, therefore from Corollary 2.1 \cite{KR} we get $v_{\ep}= \pm v_{1, \ep}$. However,  the singular parts of $v_{\ep}$ ($ v_{1, \ep}$ resp.) come from $\skk$ ($\skke$ resp.), thus $v_{\ep}=  v_{1, \ep}$, i.e.
\[
\qe - \we= \frac{\| \qe - \we \|_{\lue}}{\| \skke - \wje \|_{\lue}} (\skke - \wje),
\]
If we use (\ref{c}), (\ref{d}), (\ref{g}) and $\| \skke \|_{\lue}= 1+ \op$, then
\eqq{\qe - \skke= \we- \wje + \op (\skke - \wje). }{i}
From conditions  (\ref{d}) and (\ref{g}), by using  the standard regularity argument, we obtain  that for any fixed $K$, satisfying  the Lemma assumption, $\we$ and % \rightarrow 0$, 
$\wje$ % \rightarrow 0$ 
converge uniformly to zero on $K$. Next, for $K$ as above, ${\skke}_{|K}$ is uniformly bounded with respect to $\ep$, provided $\ep $ is small enough (see Proposition 2.1 \cite{KR}). Therefore,
\eqq{\qe - \skke \rightarrow 0 \m{ uniformly on } K.}{jj}
%for any $K$ as in the assumptions. 
Finally, we notice that $\skk$ is continuous on %for any 
$K$, hence $\qe  \rightarrow  \skk $ uniformly on $K$. Therefore, %for any $K$ as in the assumption,   
the claim of the Lemma follows. %  (\ref{jj}).
\end{proof}

\begin{remark}
The claim of Lemma~\ref{la} is also true if $\skke$ is the dual singular
solution defined for domain $\Omega_{\ep}$, when $\Omega_{\ep}$ is a
small perturbation of $\Omega$ in both directions, $x$-axis and $y$-
axis. In this case we have to extend $\skke$ across the boundary. It
could be done by odd reflection with respect to $\gf_{\ep}$ and by
even reflection with respect to $\g_{\ep}$. The proof requires only
minor modifications.      \label{cola}
\end{remark}

Let us fix $\la_{0}>1$. Then, each $(r_{1}, r_{2})\in \rr_{+}^{2}$ defines
domain $\Omega= \Omega(r_{1}, r_{2})$ for which we construct a dual
singular solution $\skk=\skk(r_{1}, r_{2})$. In the proof of 
\cite[Lemma~2.9]{KR}, we indicate that zero level sets of $\skk$  may have five
possible shapes. The $k$-th possibility is illustrated by $k$-th
figure in \cite{KR}, $k=1,2,3,4$ and the fifth possibility corresponds
to the situation when the vertex $S_{1}$ and the side $\g_{4}$ are
connected by a zero level set.    We divide the set $\rr^{2}_{+}$ into
three parts, $A_{1}$, $A_{2}$, $A_{3}$ in the following way:\\
$(r_{1},
r_{2})\in A_{1}$ if for $\skk(r_{1}, r_{2})$ the first or the forth
possibility holds;\\ $(r_{1}, r_{2})\in A_{2}$ if for $\skk(r_{1},
r_{2})$ the second   or the fifth possibility holds;\\
$(r_{1},
r_{2})\in A_{3}$ if for $\skk(r_{1}, r_{2})$ the third possibility
holds. Then,
\begin{equation}\label{rem1}
\rr^{2}_{+}= A_{1}\cup A_{2}\cup A_{3}.
\end{equation}
In other words, $(r_{1},r_{2})\in A_{1}$ if a dual singular solution for $\Omega(r_{1},r_{2})$ is positive in a neighborhood of $\gf$,   $(r_{1},r_{2})\in A_{2}$ if it is negative  in a neighborhood of $\gf$ and $(r_{1},r_{2})\in A_{3}$ if $\skk$ changes sign in any neighborhood of $\gf$ . We will show that Lemma~\ref{la} implies that the structure of zero level sets of $\skk$ remains unchanged under a small perturbation of the domain. More precisely,

\begin{lemma}
The sets $A_{i}$ are open.
\label{lb}
\end{lemma}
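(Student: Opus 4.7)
The plan is to fix $(r_1^\ast, r_2^\ast) \in A_i$, with domain $\Omega^\ast = \Omega(r_1^\ast, r_2^\ast)$ and dual singular solution $\skk^\ast$, and show that all $(r_1, r_2)$ sufficiently close to $(r_1^\ast, r_2^\ast)$ lie in $A_i$ as well. Writing $\skk$ for the dual singular solution on $\Omega = \Omega(r_1, r_2)$, the central tool is Lemma~\ref{la}, extended via Remark~\ref{cola} to joint perturbations in both coordinate directions: $\skk$ converges uniformly to $\skk^\ast$ on every compact $K \subset \overline{\Omega^\ast}$ that avoids the inner vertices $S_j$.

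\textbf{Cases $i=1,2$.} Take $i=1$; the case $i=2$ is symmetric. By hypothesis, $\skk^\ast > 0$ in some neighborhood of $\gf^\ast$. Pick a small $\delta_0 > 0$ and take the rectangular contour $\gamma = \partial\bigl((1-\delta_0) R_2^\ast\bigr)$, which lies strictly inside this positivity neighborhood, strictly outside $\g^\ast$, and away from the inner vertices. By continuity and compactness, $\skk^\ast \geq c > 0$ on $\gamma$. For $(r_1, r_2)$ sufficiently close to $(r_1^\ast, r_2^\ast)$, the contour $\gamma$ is still trapped between $\g$ and $\gf$ inside $\Omega$, and Lemma~\ref{la} yields $\skk \geq c/2 > 0$ on $\gamma$. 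In the annular region $V$ between $\gamma$ and $\gf$, $\skk$ is harmonic, continuous up to $\partial V$, vanishes on $\gf$ and is strictly positive on $\gamma$; the strong maximum principle then gives $\skk > 0$ throughout $V$, hence in a neighborhood of $\gf$. So $(r_1, r_2) \in A_1$.

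\textbf{Case $i = 3$.} Here $\skk^\ast$ changes sign in every neighborhood of $\gf^\ast$. Because each outer vertex $\widetilde S_j$ has interior angle $\pi/2$ with Dirichlet data on both adjacent sides, the singular exponent there is an integer and $\skk^\ast$ is smooth up to a full neighborhood of $\gf^\ast$; a first-order Taylor expansion then shows that this sign-change property is equivalent to $\partial_n \skk^\ast$ changing sign on $\gf^\ast$. Pick points $p_+^\ast, p_-^\ast \in \gf^\ast$, away from the outer vertices, at which $\partial_n \skk^\ast$ has opposite strict signs. After a local diffeomorphism flattening both $\gf^\ast$ and the nearby segment of $\gf$ onto a common reference segment, $\skk^\ast$ and $\skk$ pull back to harmonic functions vanishing on the reference boundary, and standard boundary Schauder estimates upgrade the uniform convergence from Lemma~\ref{la} to $C^1$ convergence up to the boundary. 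Thus $\partial_n \skk$ takes opposite signs at points of $\gf$ near $p_\pm^\ast$, so $\skk$ changes sign in every neighborhood of $\gf$, and $(r_1, r_2) \in A_3$.

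\textbf{Main obstacle.} Case $i = 3$ is the delicate one: promoting the uniform convergence supplied by Lemma~\ref{la} to $C^1$ convergence of $\skk$ up to the moving Dirichlet boundary $\gf$, which is close to but distinct from $\gf^\ast$. Cases $i = 1, 2$ only require the values of $\skk$ on a fixed interior contour together with the maximum principle.
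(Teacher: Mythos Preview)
Your argument follows the paper's strategy: for $A_1, A_2$ you test positivity on a fixed contour near $\gf^\ast$ and use the maximum principle, and for $A_3$ you locate boundary points with $\partial_n\skk^\ast$ of opposite sign and transfer this to the perturbed solution via $C^1$ convergence near the outer boundary.

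Two remarks on the $A_3$ case. First, the claimed equivalence ``$\skk^\ast$ changes sign near $\gf^\ast$ $\Leftrightarrow$ $\partial_n\skk^\ast$ changes sign on $\gf^\ast$'' does \emph{not} follow from a first-order Taylor expansion alone: at a boundary point where $\partial_n\skk^\ast=0$ the local sign of $\skk^\ast$ is governed by higher-order terms, so $\partial_n\skk^\ast\ge 0$ everywhere would not by itself force $\skk^\ast\ge 0$ nearby. The paper closes this by using the explicit structure of the third case (four zero-level curves joining the $S_i$ to $\gf^\ast$, producing sign-definite subregions that touch $\gf^\ast$) together with the Hopf lemma, which yields points $x_1,x_2\in\gf^\ast$ with strictly opposite normal derivatives. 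You should invoke Hopf rather than Taylor here.

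Second, your diffeomorphism/Schauder route for the $C^1$ upgrade is correct but heavier than necessary. The paper exploits that $\gf_\ep$ is a straight segment: odd reflection across it extends $\skke$ to a harmonic function on a fixed open neighborhood of $x_1,x_2$ (for all small $\ep$), and then the standard fact that uniform convergence of harmonic functions implies convergence of all derivatives on interior compacta gives the required $C^1$ control without any boundary estimates.
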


\begin{proof}
Let $(r_{1}, r_{2})\in A_{1}$ and $\skk(r_{1}, r_{2})$ be a
corresponding solution for $\Omega=\Omega(r_{1}, r_{2})$. For each
$\ep=(\ep_{1}, \ep_{2})$, we have  $\skke$ defined for domain
$\Omega_{\ep}$, where $\Omega_{\ep}= R_{2, \ep} \setminus
\overline{R_{1,\ep}} $, $R_{1, \ep }= (- r_{1}- \ep_{1},  r_{1}+
\ep_{1})\times (- r_{2}- \ep_{2},  r_{2}+ \ep_{2})$, $R_{2, \ep }=
\la_{0} R_{1, \ep}$. We shall show that for $\ep_{i}$ close to zero
the corresponding solution $\skke$ has the zero level set as in the
first or forth case. Firstly, we note that $\skk$ is
positive in a neighborhood of $\gf$. We denote the set
$R_{2}\setminus (\la_{0}-\delta)R_{1}$ by $K$, where $\delta$ is so small 
that $\skk_{|(\la_{0}-\delta)\partial R_{2}}>0$. For this set $K$, we
use Lemma~\ref{la} (more precisely Remark~\ref{cola}). Then $\skke
\rightarrow \skk$ on $K$ and hence ${\skke}_{|(\la_{0}-\delta)\partial
  R_{2}}>0$ for $\ep_{i}$ small enough. It means that for such
$\ep=(\ep_{1}, \ep_{2})$, the first or the fourth possibility holds,
i.e. $( r_{1}+ \ep_{1},  r_{2}+ \ep_{2})\in A_{1}$, in other words set 
$A_{1}$ is open.

For set $A_{2}$, we may proceed in a similar manner, because in this case the corresponding solution $\skk$ is negative in a neighborhood of $\gf$ and we can argue as earlier.

It remains to show that $A_{3}$ is open. First, we notice that if $(r_{1},r_{2})\in A_{3}$, then the corresponding solution $\skk$ has at least two points $x_{1}, x_{2}\in \gf$, such that
\eqq{\frac{\partial \skk}{\partial n}(x_{1})>0, \hd \frac{\partial \skk}{\partial n}(x_{2})<0. }{jjj}
Indeed,  in this case the zero level set consists of four analytic curves, each of them connects vertex $S_{i}$ with $\gf$. These curves divide $\Omega$ into four regions. In two of them  $\skk$ is positive and in the other two regions $\skk$ is negative. Hence, in each of these regions, $\skk$ attains its supremum or infimum on $\gf$, because $\skk_{|\gf}=0$.  Then, by Hopf Lemma, we get (\ref{jjj}).

Further, from Lemma \ref{la} and Remark~\ref{cola}, we have uniform convergence of $\skke$ to $\skk$ on compact subsets, which do not contain any vertex.  But $\skke$ are harmonic, thus their derivatives also converge uniformly to derivatives of $\skk$ on such compact subsets. Thus, \[
\frac{\partial \skke}{\partial n}(x_{1})>\frac{1}{2}\frac{\partial \skk}{\partial n}(x_{1})>0, \hd \frac{\partial \skke}{\partial n}(x_{2})<\frac{1}{2}\frac{\partial \skk}{\partial n}(x_{2})<0,
\]
for $\ep$ small enough. Finally, we conclude that
\[
\frac{\partial \skke}{\partial n}(x_{1, \ep})>0, \hd  \frac{\partial \skke}{\partial n}(x_{2,\ep})>0,
\]
for $x_{1, \ep}, x_{2, \ep}\in \gf_{\ep}$, provided $\ep$ is sufficiently small. 

Thus, the above inequalities imply that $\skke$ is negative in a neighborhood of $x_{1, \ep}\in \gf_{\ep}$ and positive in a neighborhood of $x_{2, \ep}\in \gf_{\ep}$. This is so, because ${\skke}_{|\gf_{\ep}}=0$.  By the continuity of $\skke$ in $\Omega_{\ep}$, $\skke$ attains zero on every curve connecting these neighborhoods. As a result, the zero level set of $\skke$ connects $\g_{\ep}$ and $\gf_{\ep}$, provided that $\ep$ is sufficiently small. Hence, %for $\skke$ we are in the set $A_{3}$  and this proves that 
$A_{3}$ is open.
\end{proof}

\begin{theorem}
We assume that $r_{1},r_{2}>0$, $\la_{0}>1$ and $\Omega$ is defined by (\ref{defR}). Let $\skk$ be the dual singular solution  constructed in definition 2.1 \cite{KR} for the domain $\Omega$.
Then, $W_{0}$, the zero level set of $\skk$ consists of four analytic curves. Each of them connects one vertex $S_{i}$ with the outer part of the boundary $\gf$. In particular, $\skk_{|\g_{1}}<0$ and $\skk_{|\g_{2}}>0$, hence $\int_{\g_{1}}\skk<0$ and $\int_{\g_{2}}\skk>0$.
\label{structure}
\end{theorem}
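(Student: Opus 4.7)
The plan is to exploit the partition (\ref{rem1}) of $\mathbb{R}^{2}_{+}$ together with Lemma~\ref{lb} and a connectedness argument, reducing the theorem to identifying which of the three possible behaviors actually occurs at a single convenient pair $(r_{1},r_{2})$.

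First, I would observe that the three sets $A_{1}, A_{2}, A_{3}$ are pairwise disjoint straight from their definition: in $A_{1}$ the function $\skk$ is positive in a neighborhood of $\gf$, in $A_{2}$ it is negative, while in $A_{3}$ it changes sign in every neighborhood of $\gf$, and these three conditions are mutually exclusive. Combined with Lemma~\ref{lb}, this gives a disjoint open partition of the connected set $\mathbb{R}^{2}_{+}$, so exactly one of the sets $A_{i}$ equals $\mathbb{R}^{2}_{+}$ and the other two are empty.

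Second, I would identify the nonempty class by testing the symmetric case $r_{1}=r_{2}$. Here $\Omega$ is invariant under reflection across each diagonal of the inner square. The dual singular solution $\skk$ from \cite[Definition~2.1]{KR} is uniquely determined up to a sign by conditions (11)--(14) of \cite{KR} (the very uniqueness exploited in the proof of Lemma~\ref{la}), so it must transform in a definite way under each such reflection. Because the four corner singularities entering the construction carry alternating signs, $\skk$ turns out to be antisymmetric across each diagonal, and therefore $\skk$ vanishes on the portion of each inner-square diagonal lying in $\overline{\Omega}$. This yields four analytic arcs, each joining a vertex $S_{i}$ to $\gf$, which is precisely possibility three from the list in \cite{KR}. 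Hence $(r,r)\in A_{3}$, and by the first step $A_{3}=\mathbb{R}^{2}_{+}$, proving the first assertion of the theorem.

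Third, for the ``in particular'' statement, the four zero-arcs divide $\Omega$ into four connected components, each adjacent to exactly one of the sides $\g_{i}$; by maximum principle $\skk$ has constant sign on each such component, and the signs alternate between neighboring components. Reading off the prescribed sign of the leading singular coefficient at a single reference corner in \cite[Definition~2.1]{KR} identifies the component touching $\g_{1}$ as the one on which $\skk<0$ and the component touching $\g_{2}$ as the one on which $\skk>0$. Integrating these pointwise inequalities yields $\int_{\g_{1}}\skk<0$ and $\int_{\g_{2}}\skk>0$.

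The hardest part will be the second step: confirming that for $r_{1}=r_{2}$ the dual singular solution is genuinely antisymmetric, rather than symmetric, across a diagonal. This requires unpacking the sign conventions built into \cite[Definition~2.1]{KR} at each of the four corner singularities and verifying that they alternate as expected. Once this bookkeeping is in place, the connectedness argument of the first step and the sign-tracing of the third step follow routinely.
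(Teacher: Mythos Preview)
Your proposal is correct and follows essentially the same strategy as the paper: the connectedness argument of your first step is identical, and your second step (diagonal antisymmetry in the square case) is precisely the content of \cite[Lemma~2.10]{KR}, which the paper simply cites to conclude $(r_{1},r_{1})\in A_{3}$.

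The only noteworthy difference is in the ``in particular'' part. You argue via the four-component decomposition and read the sign off the leading singular coefficient at a reference corner; the paper instead argues directly that $\inf_{\g_{1}}\skk=-\infty$ (from the corner asymptotics with the sign convention of \cite{KR}) and then uses the Hopf lemma to rule out any point on $\g_{1}$ where $\skk\geq 0$, since at such a boundary maximum of the adjacent negative region the normal derivative would be nonzero, contradicting $\partial\skk/\partial n=0$ on $\g$. Both routes ultimately rest on the same input---the sign of the singular coefficient fixed by \cite[Definition~2.1]{KR}---so the difference is organizational rather than substantive.
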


\begin{proof}
From (\ref{rem1}) and Lemma~\ref{lb}, we deduce that exactly one
from the sets $A_{i}$ is not empty. Lemma~2.10 \cite{KR} shows that
$A_3$ is not empty, because $(r_{1},r_{1})\in A_{3}$. Thus $\rr^{2}_{+}=A_{3}$. It means that for each $(r_{1},r_{2})\in \rr^{2}_{+}$, the structure of the level set is the same as in the third possibility, i.e. $S_{i}$ and $\gf$ are connected by the zero level set.
In order to prove the remaining statements, we note that $\skk$ is continuous on $\g_{i}$ and $\inf\limits_{\g_{1}}\skk=-\infty$, \hd   $\sup\limits_{\g_{2}}\skk=\infty$. If   $\max\limits_{\g_{1}} \skk=\skk(x_{0},r_{2}) \geq 0$, then $\skk$ restricted to the set $\{(x,y)\in \Omega: \hd \skk(x,y)<0 \}$ has a maximum on $\g_{1}$, thus  by  Hopf Lemma, the outer normal derivative at $(x_{0},r_{2})$ is positive, which contradicts the definition of $\skk$. A similar argument works for $\g_{2}$.

\end{proof}

\section{Berg's effect}

Berg's effect is related to the boundary behavior of  harmonic function, $u$, in a domain of the form $\Omega \setminus P$, where $P$ is a polygon, and  the normal derivative of $u$ is constant on each side of $P$. Here, we study only harmonic functions, which are solutions to (\ref{main}), (\ref{defun}).

%\no Recall, that $u_{n}$ were defined in (\ref{defun}).

%\begin{remark}
%The solution $u$ depends linearly on $u_{n}=a$, thus  (\ref{ca}) hold for some $a>0$  if and only if it holds for all $a>0$.
%\end{remark}
Our main observation is that Berg's effect holds if and only if $u$ is a regular solution to (\ref{main}), (\ref{defun}).

\begin{theorem}
Berg's effect holds for $(\Omega,a,b)$ if and only if the corresponding weak solution of (\ref{main}), (\ref{defun})  is in $C^{1}(\overline{\Omega})$.
\label{bergcj}
\end{theorem}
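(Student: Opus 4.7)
My plan is to establish both implications by means of the singular decomposition from \cite{KR}: any weak solution $u$ of (\ref{main})--(\ref{defun}) can be written as $u=u_{R}+c\,U^{M}$, where $u_{R}\in C^{1}(\overline{\Omega})$ is a regular remainder and $U^{M}$ is a fixed combination of the four leading corner singular functions $\rho_{i}^{2/3}\cos(\tfrac{2}{3}\theta_{i})$ at the vertices $S_{i}$. The two symmetries of the data, $x\mapsto-x$ and $y\mapsto-y$, force one common coefficient $c$ at all four vertices, so $u\in C^{1}(\overline{\Omega})$ is equivalent to $c=0$; this reduces the theorem to the equivalence ``$c=0\iff$ Berg''.

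For the direction $u\in C^{1}(\overline{\Omega})\Rightarrow$ Berg's effect I would set $v=u_{x}$, which is harmonic in $\Omega$. Its boundary behaviour can be read off directly: on the horizontal facets of $\Gamma$ the constancy of the Neumann datum gives the homogeneous Neumann condition $v_{n}=0$; on the vertical facets of $\Gamma$ the value $v=\pm b$ is a constant Dirichlet datum; on the horizontal pieces of $\widetilde{\Gamma}$ the tangential derivative of the Dirichlet datum $u=0$ yields $v=0$; and on the vertical pieces of $\widetilde{\Gamma}$, Hopf's lemma applied to $u$ (which is nonnegative in $\Omega$ by a short maximum-principle argument exploiting $u_{n}>0$ on $\Gamma$) gives $v$ a definite sign. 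The oddness of $v$ in $x$ supplies the extra Dirichlet condition $v=0$ on the symmetry axis $\{x=0\}$, so that on the subdomain $\Omega\cap\{x>0\}$ every Dirichlet datum for $v$ is nonpositive and every Neumann datum vanishes. The maximum principle, combined with Hopf at the Neumann portion of the boundary, then forces $v\le 0$ on this subdomain, hence $xu_{x}\le 0$ on the positive half of $\Gamma_{1}$. The negative half follows by symmetry, and the same argument applied to $u_{y}$ on $\Omega\cap\{y>0\}$ delivers $yu_{y}\le 0$ on $\Gamma_{4}$.

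For the converse I argue by contraposition: if $c\neq 0$, Berg's effect must fail. The mechanism is that the angular profile satisfies $\cos(\tfrac{2}{3}\theta_{i})=\pm 1$ on the two sides adjacent to $S_{i}$, with opposite signs on the two sides since $\tfrac{2}{3}\cdot\tfrac{3\pi}{2}=\pi$. Consequently, the tangential derivative of the singular part $c\rho_{i}^{2/3}\cos(\tfrac{2}{3}\theta_{i})$ behaves like $+\tfrac{2}{3}c\,\rho_{i}^{-1/3}$ along one adjacent facet and like $-\tfrac{2}{3}c\,\rho_{i}^{-1/3}$ along the other, and dominates the bounded contribution from $u_{R}$. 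At the vertex lying at the end of $\Gamma_{1}$, which is simultaneously at the end of a vertical facet that by the $x\mapsto-x$ symmetry is covered by the $\Gamma_{4}$ condition, exactly one of the quantities $xu_{x}\!\mid_{\Gamma_{1}}$ or $yu_{y}\!\mid_{\Gamma_{4}}$ tends to $+\infty$ as the vertex is approached, the choice being dictated by the sign of $c$. So whichever sign $c$ carries, one of the two Berg inequalities is violated near the vertex.

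The delicate point is the sign bookkeeping in the converse: one must pin down, for the orientation of the local polar angles $\theta_{i}$ used in \cite{KR}, which of the two facets adjacent to $S_{i}$ carries the $+$ sign of the angular profile and which carries the $-$, so that the sign $c>0$ is ruled out by Berg's inequality on one pair of opposite facets and $c<0$ by the inequality on the other. Once this matching is in place, neither sign of $c$ is compatible with Berg's effect, forcing $c=0$ and hence $u\in C^{1}(\overline{\Omega})$.
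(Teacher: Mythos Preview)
Your proposal is correct and follows essentially the same route as the paper. For the implication $C^{1}\Rightarrow$ Berg, both you and the paper study $v=u_{x}$ on $\Omega\cap\{x>0\}$, use positivity of $u$ (via Hopf and $u_{n}>0$) to control the sign of $v$ on the vertical outer side, read off $v=0$ on the horizontal outer sides and on $\{x=0\}$, and then rule out a positive maximum on $\Gamma_{1}$ by Hopf combined with $\partial_{n}v=\partial_{x}(\partial_{n}u)=\partial_{x}a=0$. For the converse, both argue by contraposition from the singular decomposition: the tangential derivative of $c\,\rho_{i}^{2/3}\cos(\tfrac{2}{3}\theta_{i})$ blows up like $\pm\tfrac{2}{3}c\,\rho_{i}^{-1/3}$ with opposite signs on the two adjacent facets, so whichever sign $c$ carries, one of the two Berg inequalities fails near the corner. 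The only point you leave implicit is the concrete sign matching; the paper carries out this computation explicitly, obtaining $(\rho^{2/3}\cos\tfrac{2}{3}\theta)_{x}|_{\Gamma_{1}}\to+\infty$ and $(\rho^{2/3}\cos\tfrac{2}{3}\theta)_{y}|_{\Gamma_{4}}\to-\infty$, so that $c>0$ violates $xu_{x}\le 0$ on $\Gamma_{1}$ and $c<0$ violates $yu_{y}\le 0$ on $\Gamma_{4}$.
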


\begin{proof}
Suppose that $u $ is a weak solution of (\ref{main}) and $u\not \in C^{1}(\overline{\Omega})$. Then, by Corollary~2.2 \cite{KR} $a\int_{\g_{1}}\skk+ b\int{\g_{2}}\skk \not = 0$, hence there is a singular part of solution $u$ and by Proposition~2.1 \cite{KR}, we have
\[
u = c_{2,1}\rh_{1}^{\frac{2}{3}}\cos{\frac{2}{3}\theta_{1}}+ w \m{ on } B(S_{1}, \delta),
\]
for a $\delta>0$, where $w\in C^{1}(\overline{\Omega}\cap
B(S_{1},\delta))$ and $c_{2,1}\not =0$ (see Section 1 for the
definitions of $\rh_{1}$ and $\theta_{1}$). Due to the boundary conditions
$w_{x}(S_{1})<0$ and $w_{y}(S_{1})<0$, hence by the continuity of $\nabla
w$, we get ${w_{x}}_{|\g_{1}\cap B(S_{1}, \delta_{1})}<0$ and
${w_{y}}_{|\g_{4}\cap B(S_{1}, \delta_{1})}<0$ for a
$\delta_{1}>0$. On the other hand, ${(\rh_{1}^{\frac{2}{3}}
  \cos{\frac{2}{3}\theta_{1}}
  )_{x}}_{|\g_{1}}=(-(r_{1}-x)^{\frac{2}{3}})_{x}=
\frac{2}{3}(r_{1}-x)^{-\frac{1}{3}}\nearrow \infty$ if $x\rightarrow
r_{1}^{-}$ and ${(\rh_{1}^{\frac{2}{3}} \cos{\frac{2}{3}\theta_{1}}
  )_{y}}_{|\g_{4}}=((r_{2}-y)^{\frac{2}{3}})_{y}=
-\frac{2}{3}(r_{1}-y)^{-\frac{1}{3}}\searrow -\infty$ if $y\rightarrow
r_{2}^{-}$. Thus, for a $\delta_{2}>0$, we have
${xu_{x}}_{|\g_{1}\cap B(S_{1}, \delta_{2})}>0$ if $c_{2,1}>0$ and
${yu_{y}}_{|\g_{4}\cap B(S_{1}, \delta_{2})}>0$ if
$c_{2,1}<0$. Thus, Berg's effect does not hold for $(\Omega,a,b)$.

In order to prove the other implication, we suppose that $u$ is a  weak solution of
(\ref{main}), which  belongs to $C^{1}(\overline{\Omega})$ but Berg's
effect does not hold for $(\Omega,a,b)$, i.e. $u_{x}(x,r_{2})>0$ for
an $x \in (0,r_{1})$ or $u_{y}(r_{1}, y)>0$ for a $y\in
(0,r_{2})$. We shall show that the first possibility leads to a
contradiction (the reasoning in the other case is the same). For this
purpose we denote  the set $\Omega \cap \{(x,y): \hd
x>0 \}$ by $\Omega_{+}$ and we shall consider $u_{x}$ in $\Omega_{+}$. Then, $u_{x}$ is
harmonic in $\Omega_{+}$ and continuous on
$\overline{\Omega_{+}}$. First, we note that $u$ is positive in
$\Omega$. Indeed, by definition $u_{|\gf}=0$, thus if $u$ were
negative at a point of $\Omega$, then $u$ would have a negative
minimum on $\g$, but then we would get a contradiction with Hopf Lemma,
because by definition  $\frac{\partial u}{\partial n}_{|\g}>0$. Hence,
$u$ is positive in $\Omega$ and from boundary condition $u_{|\gf}=0$,
we deduce that ${u_{x}}_{|\gf_{4}} \leq 0$. Next, by condition
$u_{|\gf}=0$, we get ${u_{x}}_{|\gf_{1} \cup \gf_{3}}=0$ and by
the symmetry of $u$ with respect to $\{x=0 \}$, we obtain $u_{x}=0$ on
$\Omega \cap \{x=0 \}$. Finally, by definition (see (\ref{defun})),
${u_{x}}_{|\g_{4}}=-b$. Thus, if $u_{x}(x,r_{2})>0$ for an $x\in
(0,r_{1})$, then  $u_{x}$ restricted to $\Omega_{+}$ admits a positive
maximum, which is necessarily  located on $\g_{1}$, say at
$(x_{0},r_{1})\in \g_{1}$. Then, by Hopf Lemma, we deduce that
$\frac{\partial u_{x}}{\partial n}(x_{0},r_{1})>0 $, but on the other
hand using boundary condition (\ref{defun}) we get  $\frac{\partial u_{x}}{\partial
  n}(x_{0},r_{1})= \frac{\partial}{\partial x} \frac{\partial
  u}{\partial n}(x_{0},r_{1})=\frac{\partial}{\partial x} a=0$, which 
yields a contradiction. Thus, $u_{x}$ is non positive on the boundary of
$\Omega_{+}$, hence it is negative in $\Omega_{+}$. In particular, ${u_{x}}_{|\Omega_{+}}\leq 0 $, hence ${xu_{x}}_{|\g_{1}}\leq 0$. The proof that ${yu_{y}}_{|\g_{4}}\leq 0 $ is analogous.
\end{proof}

\begin{proof}[Proof of Theorem~\ref{corouni}]
If $r_{1}=r_{2}$, then applying Theorem~1.2 \cite{KR}, we get $u\in C^{1}(\overline{\Omega})$ and from  Theorem~\ref{bergcj} we deduce that Berg's effect holds for $(\Omega,1,1)$. In the general case, from Theorem~\ref{structure} we deduce that numbers $\alpha_{1}=\int_{\g_{1}}\skk$ and $\beta_{1}=\int_{\g_{4}}\skk$ from Theorem~1.1 \cite{KR} are not zero, hence the claim follows from Theorem~1.1 \cite{KR} and Theorem~\ref{bergcj}.
\end{proof}

In the above considerations, we analyze the stability of Berg's effect under a perturbation of boundary conditions. Below, we shall investigate its stability %of Berg's effect 
under a domain perturbation.
%The main question of this investigations is whether Berg's effect for $\Omega$ with $r_{1}\not =r_{2}$ holds. We shall consider small %perturbations of domains for which Berg's effect holds and we prove that it cease to hold.

\begin{proof}[Proof of Theorem~\ref{stab}]
Recall that domain $\Omega_{\ep}$ is defined by (\ref{defoep}) and positive number  $b$ is given by Corollary~\ref{corouni}. Denote by $u$ a weak solution of the problem
\eqq{\left\{ \begin{array}{rclll}
\lap u &=& 0 &  \m{ in } & \Omega, \\
\frac{\partial u }{\partial n } &=& u_{n} & \m{ on } & \g, \\
u &=&0 & \m{  on } &  \gf, \\
\end{array}
\right.
}{ta}
where $u_{n}$ is defined in (\ref{defun}). Then, from Theorem~\ref{bergcj}  and  \cite[Proposition~2.1]{KR}, we have  $u\in H^{2}(\Omega)\cap C^{1}(\overline{\Omega})$. We set,
\[
u_{\ep} (x,y)= \left\{ \begin{array}{cll}  u(x-\ep \sgn{x},y) & \m{ for } & \ep<|x|\leq r_{1}+\ep \\
u(0,y) & \m{ for } &  |x|\leq \ep \\ \end{array}\right.
\]
From the symmetry  of  $u$ with respect to  $\{x=0 \}$ we get  $u_{x}(0,y)=0$, hence  $u_{\ep}\in  H^{2}(\Oe)\cap C^{1}(\overline{\Omega_{\ep}})$ and
\[
\lap u_{\ep} (x,y)= \left\{ \begin{array}{cll}  0 & \m{ for  } & \ep<|x|\leq r_{1}+\ep \\
u_{yy}(0,y) & \m{ for  } &  |x|\leq \ep \\ \end{array}\right.
\]
We set  $f_{\ep}= \lap u_{\ep}$. Then, we notice that $u_{\ep}=0$ on $\gf_{\ep}$ and  $\frac{\partial u_{\ep} }{\partial n }= u_{n,\ep} $   on $ \g_{\ep}$, where
\eqq{
 u_{n,\ep} =
\left\{
\begin{array}{ll}
 a & \hbox{for } |x_2| = r_2,\\
 b & \hbox{for } |x_1| = r_1+\ep.
\end{array} \right.
}{defune}
\no Function  $f_{\ep}$ belongs to $  L^{2}(\Oe)$. Hence, there exists a unique  $w_{\ep}\in H^{1}(\Oe)$, a solution of the problem
\eqq{\left\{ \begin{array}{rll}
\lap w_{\ep} = f_{\ep} &  \m{ in } & \Oe, \\
\frac{\partial w_{\ep} }{\partial n }= 0 & \m{ on } & \g_{\ep}, \\
w_{\ep} =0 & \m{  on } &  \gf_{\ep}. \\
\end{array}
\right.
}{tb}
We denote $v_{\ep}= u_{\ep}- w_{\ep}$. Then, $v_{\ep}$ satisfies
\eqq{\left\{ \begin{array}{rrlll}
\lap v_{\ep} &=&0  &  \m{ in } & \Oe, \\
\frac{\partial v_{\ep} }{\partial n } &=& u_{n,\ep} & \m{ on } & \g_{\ep}, \\
v_{\ep} &=&0 & \m{  on } &  \gf_{\ep}. \\
\end{array}
\right.
}{tc}
By Theorem~\ref{bergcj}, Berg's effect holds for $(\Omega_{\ep},a,b)$ if and only if $v_{\ep} $ is in $C^{1}(\overline{\Omega_{\ep}})$.
Therefore, we shall investigate the smoothness of $v_{\ep}$. From Lemma~2.1 and Proposition~2.2 in \cite{KR} we have the decomposition of solution of  (\ref{tc}),
\[
v_{\ep}= v_{r\ep}+ c_{\ep}\sk_{\ep},
\]
where  $v_{r\ep}\in H^{2}(\Oe)\cap C^{1}(\overline{\Oe})$, $\sk_{\ep}\in H^{1}(\Oe)\setminus H^{2}(\Oe)$, $c_{\ep}= - 2a\int_{\g_{1,\ep}}\skk_{\ep}- 2b\int_{\g_{2,\ep}}\skk_{\ep}$.

On the other hand, using a standard argument  (proof of Lemma~2.1 \cite{KR}), we get
\[
w_{\ep}= w_{r\ep}+ \widetilde{c}_{\ep}\sk_{\ep},
\]
where $w_{r\ep}\in H^{2}(\Oe)\cap C^{1}(\overline{\Oe})$ and  $\widetilde{c}_{\ep}=  \int_{\Oe}\skk_{\ep}f_{\ep}$. Thus,
\[
\underbrace{u_{\ep}}_{\in H^{2}(\Oe)}= v_{\ep}+ w_{\ep}= \underbrace{(v_{r\ep}+ w_{r\ep})}_{\in H^{2}(\Oe)}+ (c_{\ep}+ \widetilde{c}_{\ep})\underbrace{\sk_{\ep}}_{\not \in H^{2}(\Oe)}.
\]
Hence,  $c_{\ep}= - \widetilde{c}_{\ep}$ and we obtain
\eqq{\int_{\Oe} \skk_{\ep}f_{\ep}=2a\int_{\g_{1,\ep}} \skk_{\ep}+2b\int_{\g_{2,\ep}}\skk_{\ep}.  }{td}
We would like to show that $v_{\ep}$ is not in $C^{1}(\overline{\Omega})$ for small $\ep$. For this purpose, it is enough to show that the  left hand side of (\ref{td}) is not zero.   First, we notice that using Lemma~\ref{la}, we conclude that $\ep \mapsto \int_{\Oe} \skk_{\ep}f_{\ep}$ is continuous at $0$. 

Thus, the proof will be finished, if
\eqq{\lim_{\ep\rightarrow 0^{+}}\frac{1}{\ep} \int_{\Oe} \skk_{\ep}f_{\ep} \not =0. }{nna}
Using Lemma~\ref{la} again we get
\[
\lim_{\ep\rightarrow 0^{+}}\frac{1}{\ep} \int_{\Oe} \skk_{\ep}f_{\ep} dxdy =\lim_{\ep\rightarrow 0^{+}}\frac{1}{\ep}  \int_{\Oex} \skk_{\ep} u_{yy}(0,y) dxdy
\]
\[
= 2\int_{r_{2}}^{\lambda_{0}r_{2}} \skk (0,y)u_{yy}(0,y)dy= - 2\int_{r_{2}}^{\lambda_{0}r_{2}} \skk (0,y)u_{xx}(0,y)dy.
\]
By Theorem~\ref{structure} we deduce $\skk (0,y)<0$ for $y\in (r_{2}, \lambda_{0}r_{2})$. Finally, in the second part of  the proof of Theorem~\ref{bergcj} we deduce that $u_{x}$ is negative in  $\Omega_{+}= \Omega\cap \{ x>0 \}$ and by symmetry $u_{x}=0$ on $\Omega \cap \{x=0 \}$. Hence the  function  ${u_{x}}_{|\Omega_{+}}$ has its maximum on $\Omega \cap \{x=0 \}$. Therefore from Hopf Lemma w deduce that $u_{xx}(0,y)<0$ for $y\in  (r_{2}, \lambda_{0}r_{2})$, hence
\[
- 2\int_{r_{2}}^{\lambda_{0}r_{2}} \skk (0,y)u_{xx}(0,y)dy <0,
\]
 and the proof is finished.
\end{proof}

\subsection*{Acknowledgment}
Both authors were partially supported by NCN through 2011/01/B/ST1/01197
grant.

\end{document}